\newtheorem{thm}{Theorem}
\newtheorem{prop}{Proposition}
\newtheorem{lem}{Lemma}
\newtheorem{false statement}{False statement}
\theoremstyle{definition}
\newtheorem{prob}{Problem}
\newcounter{mathitem}
  {\begin{list}{{$(\roman{mathitem})$}}{
   \setcounter{mathitem}{0}
   \usecounter{mathitem}
   \setlength{\topsep}{0pt plus 2pt minus 0pt}
   \setlength{\parskip}{0pt plus 2pt minus 0pt}
   \setlength{\partopsep}{0pt plus 2pt minus 0pt}
   \setlength{\parsep}{0pt plus 2pt minus 0pt}
   \setlength{\leftmargin}{35pt}
   \setlength{\itemsep}{0pt plus 2pt minus 0pt}}}
  {\end{list}}
\begin{document}
\title{\bf\Large On the sum of $k$-th largest distance eigenvalues of graphs}

\date{}

\author{
Huiqiu Lin \footnote{Supported
by the National Natural Science Foundation of China (Nos. 11401211 and 11471121) and Fundamental Research Funds for the Central
Universities (No. 222201714049). ~~~~~~~~~~~~~~~~~~~~~~~~~~~~~~E-mail: huiqiulin@126.com (H.Q. Lin)}\\
{\footnotesize Department of Mathematics,
East China University of Science and Technology,}\\
{\footnotesize Shanghai 200237, P.R.~China}
}
\maketitle
\vspace{-9mm}
\begin{abstract}
\noindent For a connected graph $G$ with order $n$ and an integer $k\geq 1$, we
denote by $$S_k(D(G))=\lambda_1(D(G))+\cdots+\lambda_k(D(G))$$ the sum of $k$
largest distance eigenvalues of $G$. In this paper, we consider the sharp upper bound
and lower bound of $S_k(D(G))$. We determine the sharp lower bounds of $S_k(D(G))$
when $G$ is connected graph and is a tree, respectively, and characterize
both the extremal graphs. Moreover, we conjecture that the upper bound is attained
when $G$ is a path of order $n$ and prove some partial result supporting the
conjecture. To prove our result, we obtain a sharp upper bound of $\lambda_2(D(G))$
in terms of the order and the diameter of $G$, where $\lambda_2(D(G))$ is the second
largest distance eigenvalue of $G$. As applications, we prove
a general inequality involving $\lambda_2(D(G))$, the independence number of $G$,
and the number of triangles in $G$. An immediate corollary
is a conjecture of Fajtlowicz, which was confirmed in \cite{L15-L} by a different
argument. We conclude this paper with some open problems for further study.

\end{abstract}

\medskip
\noindent {\bf Keywords:} Distance matrix, distance eigenvalue, eigenvalue sum, the second largest distance eigenvalue

\medskip
\noindent {\bf Mathematics Subject Classification (2010):} 05C50
\date{}

\section{Introduction}
Throughout this paper, we consider simple, undirected
and connected graphs. Let $G$ be a graph with vertex
set $V(G)=\{v_1,\,v_2,\ldots,\,v_n\}$ and edge set $E(G)$,
where $|V(G)|=n$ and $|E(G)|=m$. Let $N(v)$ denote the
neighbor set of $v$ in $G$. Let $S\subset V(G)$. We use
$G[S]$ to denote the subgraph of $G$ induced by $S$. The
\emph{distance} between vertices $v_i$ and $v_j$, denoted
by $d_G(v_i,\,v_j)$, is the length of a shortest path from
$v_i$ to $v_j$ in $G$. The \emph{diameter} of a graph is
the maximum distance between any two vertices of $G$.

Throughout this note, we use $M(G)$ to denote a real symmetric
matrix respect to a connected graph $G$. We use
$\lambda_1(M(G))\geq \lambda_2(M(G))\geq \cdots\geq \lambda_n(M(G))$
to denote all eigenvalues of $M(G)$ and denote by
$$
S_k(M(G))=\lambda_1(M(G))+\cdots+\lambda_k(M(G)),
$$
where $k\geq 1$ is an integer. The \emph{distance matrix}
of $G$, denoted by $D(G)$ (or simply by $D$), is the real symmetric
matrix with $(i,j)$-entry being $d_G(v_i,v_j)$
$(\mbox{or}~d_{ij})$. The \emph{distance eigenvalues}
(resp., \emph{distance spectrum}) of $G$, are denoted by
$$\lambda_1(D(G))\geq \lambda_2(D(G))\geq \cdots\geq \lambda_n(D(G)).$$
Recently, the distance matrix of a graph has received
increasing attention. Aouchiche and Hansen \cite{AH16}
and Lin, Das and Wu \cite{LDW16} proved some results on
the relations between the distance eigenvalues and some
graphic parameters. Lin \cite{L15-D} proved an upper bound
on the least distance eigenvalue of a graph in terms of
it order and diameter. By using some graph operations, Pokorn\'{y}, H\'{i}c, Stevanovi\'{c} and Milo\v{s}evi\'{c} \cite{PHSM}
obtained many infinite families of distance integral graphs.
Very recently, Lu, Huang and Huang \cite{LHH}
characterized all graphs with exactly two distance eigenvalues
different from -1 and -3. Huang, Huang and Lu \cite{HHL} characterized all graphs with exactly three distance eigenvalues
different from -1 and -2.
For more results on the distance matrix
of graphs, we refer the reader to the survey \cite{AH14}.

Our main motivations of this note come from two aspects. The first
one is a paper of Mohar \cite{M09}, in which he proved that
$S_k(A(G))\leq\frac{1}{2}(\sqrt{k}+1)n$ for any graph of order
$n$ and an integer $k\geq 1$. His theorem is
originally motivated by a result of Gernert which states that
$S_2(A(G))\leq n$ for any regular graphs $G$ and the upper
bound is best possible. Another motivations are
the Grone-Merris conjecture \cite{GM1,GM2} and Brouwer's conjecture \cite{BH}.
For any graph $G$ on $n$
vertices with degree sequence $d_1\geq \cdots\geq d_n$,
its conjugate degree sequence is defined
as the sequence $d_1'\geq d_2'\geq\cdots\geq d_n'$ where
$d_k' :=|\{v_i : d_i\geq k\}|.$
The Grone-Merris conjecture, which is proved by Bai \cite{Bai11},
states that and for any $k\in \{1,\ldots, n\}$,
$S_k(L(G))\leq \sum_{i=1}^kd'_i$.
Brouwer's conjecture says that $S_k(L(G))\leq m+\binom{k}{2}$
holds for any simple graph $G$ of order $n$ and size $m$ and any $1\leq k\leq n$.
These topics have received much attentions and Brouwer's conjecture
is widely open now. However, till to now, it seems to have no study
on upper and lower bounds of $S_k(D(G))$.

In this paper, we try to bound it in terms of some parameters of graphs.
We will study the following general problem which
seems interesting and non-trivial.

\begin{prob}
For a connected graph $G$ and an integer $k\geq 2$,
to give tight upper and lower bounds on $S_k(G)$ and to
characterize the extremal graphs corresponding to them,
respectively.
\end{prob}

We first give a lower bound of $S_k(D(G))$.

\begin{thm}\label{ThGeneralSk(G)}
Let $k\geq 2$ be an integer and $n$ be sufficiently large with
respect to $k$. Let $G$ be a connected graph of order $n$.

\noindent$(i)$ Then $S_{k}(D(G))\geq n-k$ where the equality holds if
and only if $G\cong K_n$.

\noindent $(ii)$ If $G$ is a tree, then $S_{k}(D(G))\geq 2n-2k$
where the equality holds if and only if $G\cong K_{1,n-1}.$
\end{thm}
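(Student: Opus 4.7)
The plan is to deduce $(i)$ from a short spectral-averaging argument built on the Wiener lower bound for $\lambda_1(D(G))$, and to prove $(ii)$ by first treating the base case $k=2$ via a subspace tied to the bipartition of the tree and then reducing larger $k$ to it by Cauchy interlacing.

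For $(i)$, the Rayleigh bound applied to $\mathbf{1}$ gives
\[
\lambda_1(D(G))\;\ge\;\tfrac{2W(G)}{n}\;\ge\;n-1,
\]
with equality iff every distance equals $1$, i.e.\ iff $G\cong K_n$. Because $\operatorname{tr} D(G)=0$, the non-principal eigenvalues $\lambda_2,\ldots,\lambda_n$ sum to $-\lambda_1$, so their average is $-\lambda_1/(n-1)\le -1$; since $\lambda_{k+1},\ldots,\lambda_n$ are the $n-k$ smallest among them, their average does not exceed the overall average, which gives
\[
\sum_{i=k+1}^{n}\lambda_i(D(G))\;\le\;\frac{n-k}{n-1}(-\lambda_1)\;\le\;-(n-k),
\]
and hence $S_k(D(G))\ge n-k$. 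Equality forces $\lambda_1=n-1$ together with $\lambda_2=\cdots=\lambda_n=-1$, so $D(G)$ has the spectrum of $J-I$; the Perron positivity of the top eigenvector combined with the zero diagonal then forces $D(G)=J-I$, i.e.\ $G\cong K_n$.

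For $(ii)$ I would first handle $k=2$ directly. Let $(X,Y)$ be the bipartition of $T$ and set
\[
V=\bigl\{v\in\mathbb{R}^n:\;\textstyle\sum_{x\in X}v_x=0=\sum_{y\in Y}v_y\bigr\},
\]
a subspace of dimension $n-2$ with $V^{\perp}=\operatorname{span}(\mathbf{1}_X,\mathbf{1}_Y)$ and $\mathbf{1}_X\perp\mathbf{1}_Y$. A direct computation yields
\[
\operatorname{tr}\!\bigl(\Pi_V D(T)\bigr)=-\tfrac{2W_{XX}}{|X|}-\tfrac{2W_{YY}}{|Y|},
\]
where $W_{XX},W_{YY}$ denote the in-part Wiener sums. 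Since any two vertices in the same part of a tree are at distance $\ge 2$, we have $W_{XX}\ge |X|(|X|-1)$ and $W_{YY}\ge |Y|(|Y|-1)$, so $\operatorname{tr}(\Pi_V D(T))\le -2(n-2)$. The Ky Fan min principle then yields $\sum_{i\ge 3}\lambda_i(D(T))\le -2(n-2)$, whence $S_2(D(T))\ge 2(n-2)$. For $k\ge 3$ I would induct on $k$: removing any leaf $v$ of $T$ gives a tree $T'$ of order $n-1$ whose distance matrix is a principal submatrix of $D(T)$, so Cauchy interlacing yields $\lambda_j(D(T))\le \lambda_{j-1}(D(T'))$ for $j\ge 2$; summing from $j=k+1$ to $n$ and using the trace identity produces $S_k(D(T))\ge S_{k-1}(D(T'))\ge 2(n-k)$, with the last inequality being the induction hypothesis.

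The main obstacle will be the equality analysis in $(ii)$. Tightness in the base-case inequality forces every same-part distance to equal exactly $2$, which restricts $T$ to the family of double stars; to collapse this to $T\cong K_{1,n-1}$ one must show in addition that $V$ spans the bottom $n-2$ eigenvectors of $D(T)$ only when one side of the bipartition is a singleton, equivalently that $V$ is $D$-invariant only for stars. In the inductive step, equality would pull the leaf-deleted tree $T'$ back to $K_{1,n-2}$, so $T$ is either a star or a broom obtained by attaching a leaf to a leaf of $K_{1,n-2}$; ruling out the broom requires extracting strictness from the Cauchy interlacing, for instance by exhibiting a test vector of the form $(e_u-e_w)/\sqrt{2}$ with $d_T(u,w)\ge 3$ whose Rayleigh quotient is strictly below $-2$.
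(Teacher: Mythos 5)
Your part (i) and the inequality in part (ii) are correct, and they take a genuinely different route from the paper. For (i) the paper splits on the number of edges, invoking Tur\'an's theorem in the dense case and Lemma~\ref{LeLambdak-2} (proved via Ramsey's theorem) in the sparse case --- which is exactly where the hypothesis ``$n$ sufficiently large'' enters --- whereas your trace-plus-averaging argument ($\sum_{i=k+1}^{n}\lambda_i \le \frac{n-k}{n-1}(-\lambda_1)\le -(n-k)$, then $S_k=-\sum_{i>k}\lambda_i$ since $\operatorname{tr}D=0$) is shorter, works for every $2\le k\le n-1$ with no largeness assumption, and your rigidity step for equality is sound: equality forces the spectrum $\{n-1,(-1)^{(n-1)}\}$, so $D+I$ is positive semidefinite of rank one with unit diagonal, hence $D=J-I$. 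Likewise the Ky Fan partial-trace computation $\operatorname{tr}(\Pi_V D(T))=-\frac{2W_{XX}}{|X|}-\frac{2W_{YY}}{|Y|}\le -2(n-2)$ and the leaf-deletion interlacing step $S_k(D(T))\ge S_{k-1}(D(T'))$ both check out, so the inequality of (ii) is established, again without the paper's machinery.

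The genuine gap is the equality characterization in (ii), which is part of the statement and which you flag but do not close. Two steps are missing: (a) in the base case $k=2$, tightness of the Ky Fan bound requires $V$ to be spanned by the bottom $n-2$ eigenvectors of $D(T)$, and you only assert, without proof, that this invariance fails for every double star $D_{a,b}$ with $a,b\ge 1$; (b) in the inductive step you must rule out the broom, and the suggested test vector $(e_u-e_w)/\sqrt{2}$ with $d_T(u,w)\ge 3$ only shows $\lambda_n\le -3$, with no explanation of how that strictness propagates through the summed interlacing chain. Note that the paper sidesteps rigidity analysis entirely: instead of characterizing tightness, it proves the strict inequality $S_k(D(T))>2n-2k$ for every tree $T\ncong K_{1,n-1}$ in one stroke, using Lemma~\ref{lem3.1} to get $\lambda_1(D(T))>2n-3$ whenever both sides of the bipartition have at least two vertices (true once $\mathrm{diam}(T)\ge 3$), together with $\lambda_2(D(T))\ge -1$ and $\lambda_j(D(T))\ge -2$ for $3\le j\le k$. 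You can close your gap the same way: for the surviving non-star double stars, $\lambda_1>2n-3$ and $\lambda_2\ge -1$ (by interlacing with a diametral $P_4$ and Merris's theorem) already give $S_2>2n-4$, repairing (a); and for (b) no strict-interlacing lemma is needed, since equality forces $S_{k-1}(D(T-v))=2(n-1)-2(k-1)$ for \emph{every} leaf $v$, whence by induction every leaf-deleted subtree is a star, which for $n\ge 5$ happens only when $T$ is itself a star.
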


We then consider the upper bound of $S_k(D(G))$. The following problem
is our original motivation.

\begin{prob}\label{ProbUpperPath}
Let $G\ncong P_n$ be a connected graph with order $n$.  For an integer
$k\geq2$ and sufficiently large $n$ with respect to $k$,
does there hold $S_{k}(D(G))<S_{k}(D(P_n))?$
\end{prob}

Very interesting for us, in order to prove some results
supporting this problem, we need to obtain a sharp upper bound on $\lambda_2(D)$,
which may be of its own interest.
\begin{thm}\label{thLambda2Upper}
Let $G$ be a connected graph of order $n$ with diameter $d$.
Then $\lambda_{2}(D(G))\leq \frac{n(d-1)}{2}-d$,
where the equality holds if and only if
$G\cong K_n$ or $G\cong K_{\frac{n}{2},\frac{n}{2}}$.
\end{thm}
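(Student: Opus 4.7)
The plan is to bound $\lambda_2(D)$ via Courant-Fischer on the hyperplane $\mathbf{1}^\perp$, after rewriting $D$ in terms of the ``complementary'' matrix $M := d(J - I) - D$. Since $\mathbf{1}^\perp$ has dimension $n-1$, Courant-Fischer gives
\[
\lambda_2(D) \le \max_{x \perp \mathbf{1},\ \|x\|=1} x^T D x .
\]
For any unit $x \perp \mathbf{1}$ one has $x^T J x = 0$ and $x^T (J-I) x = -1$, yielding the identity $x^T D x = -d - x^T M x$; moreover $M$ has zero diagonal and entries $M_{ij} = d - d_{ij} \in [0, d-1]$ off the diagonal. Thus the theorem reduces to the uniform lower bound $x^T M x \ge -n(d-1)/2$ on unit $x \perp \mathbf{1}$.

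To prove that lower bound I would split $x^T M x = \sum_{i \ne j} M_{ij} x_i x_j$ according to the sign of $x_i x_j$. Pairs with $x_i x_j \ge 0$ contribute non-negatively and may be discarded; on pairs with $x_i x_j < 0$ the bound $M_{ij} \le d - 1$ gives $M_{ij} x_i x_j \ge (d-1) x_i x_j$. Writing $S^{+} = \{i : x_i > 0\}$ and $S^{-} = \{i : x_i < 0\}$, and setting $a := \sum_{S^+} x_i = -\sum_{S^-} x_i$ (the equality comes from $\sum x_i = 0$), the negative-sign sum evaluates to $-2a^2$, so $x^T M x \ge -2(d-1)a^2$. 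A short Cauchy-Schwarz step gives $a^2 \le |S^+| \sum_{S^+} x_i^2$ and $a^2 \le |S^-| \sum_{S^-} x_i^2$; summing the reciprocal inequalities and using $\sum x_i^2 = 1$ together with the AM-HM bound $1/|S^+| + 1/|S^-| \ge 4/(|S^+|+|S^-|) \ge 4/n$ yields $a^2 \le n/4$. Substituting gives $\lambda_2(D) \le n(d-1)/2 - d$.

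For the equality characterization I would trace every inequality backward. If the bound is attained then some unit $x^\ast \perp \mathbf{1}$ achieves $x^{\ast T} D x^\ast = n(d-1)/2 - d$, and tightness at $x^\ast$ forces: $|S^+|=|S^-|=n/2$ with $x^\ast$ constant on each side and no zero entries (from $a^2=n/4$ together with equality in Cauchy-Schwarz and AM-HM); $d_{ij}=d$ for every pair inside $S^+$ or inside $S^-$ (from the vanishing of the discarded non-negative sum, since $M_{ij}=0$); and $d_{ij}=1$ for every cross pair (from tightness of $M_{ij}\le d-1$). If $d=1$ these say $G\cong K_n$. If $d\ge 2$, the cross-pair condition makes $K_{n/2,n/2}$ a spanning subgraph of $G$, so any two vertices of $S^+$ are joined by a path of length $2$ through $S^-$; combined with the same-side condition this forces $d=2$ and no edges inside the parts, i.e.\ $G\cong K_{n/2,n/2}$. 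I expect the main delicate point of the proof to be precisely this equality analysis --- converting the four tightness conditions into rigid structural data and using the bipartite-detour observation to rule out $d\ge 3$; the inequality itself requires nothing beyond Courant-Fischer, Cauchy-Schwarz, and the identity $x^T(J-I)x=-1$ for unit $x\perp\mathbf{1}$.
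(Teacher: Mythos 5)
Your proof is correct, and it takes a genuinely different route from the paper's. The paper works with an actual eigenvector $X$ of $\lambda_2(D)$: it splits the eigenvalue equation over the sign classes $S^+,S^-$, sums to obtain the two linear inequalities $(\lambda_{2}+d-d|S^{+}|)\sum_{v\in S^{+}}x_{v}\leq |S^{+}|\sum_{w\in S^{-}}x_{w}$ and its mirror image, multiplies them into the quadratic inequality $(\lambda_{2}+d-d|S^{+}|)(\lambda_{2}+d-d|S^{-}|)\geq |S^{+}||S^{-}|$, and then must argue separately that $\lambda_2$ lies below the \emph{smaller} root $y_2$ of the associated quadratic (this needs the auxiliary estimates $\lambda_{2}\leq d|S^{+}|-d$ and $\lambda_{2}\leq d|S^{-}|-d$ to exclude the larger root) before finishing with $4|S^{+}||S^{-}|\leq (|S^{+}|+|S^{-}|)^{2}\leq n^{2}$. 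You never touch the eigenvector: Courant--Fischer on $\mathbf{1}^{\perp}$ together with the complementary matrix $M=d(J-I)-D$ reduces the theorem to a one-sided bound on the quadratic form $x^{T}Mx$, obtained by the sign split, Cauchy--Schwarz, and AM--HM. The two arguments share the sign partition, the distance bounds $1\leq d_{ij}\leq d$, and the same numeric kernel ($4|S^{+}||S^{-}|\leq n^{2}$ in the paper versus $a^{2}\leq n/4$ for you), but your version trades the quadratic-root analysis for a linear chain of inequalities, which makes the equality tracing mechanical --- each tight step directly yields $|S^{+}|=|S^{-}|=n/2$, full support, $d_{ij}=d$ inside the parts and $d_{ij}=1$ across --- and your common-neighbor observation forcing $d=2$ is actually more explicit than the paper, which leaves that step implicit. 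One bookkeeping remark: your parenthetical attributes ``no zero entries'' partly to the norm constraint, but $\sum_{i\in S^{+}\cup S^{-}}x_{i}^{2}=1$ holds automatically (zero entries contribute nothing); full support really comes from tightness of $|S^{+}|+|S^{-}|\leq n$ inside your AM--HM chain, which is indeed forced by $a^{2}=n/4$, so the conclusion is unaffected.
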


As an application of Theorem \ref{thLambda2Upper}, we can prove
the following result. In particular, we can reprove a conjecture by
Fajtlowicz \cite{F98}, which was confirmed in \cite{L15-L}.

\begin{thm}\label{ThIndepTri}
Let $G$ be a connected graph of order $n>s^3+s^2-2s+1$, where $s\geq 2$.
Suppose that the independence number $\alpha(G)\leq s$. Then there hold:

\noindent $(i)$ $\lambda_2(D(G))< 3s^3\cdot\frac{t(G)}{m}$.

\noindent $(ii)$ \rm{(Lin \cite[Theorem 1.2]{L15-L})} If $t=2$, then $\lambda_2(D(G))<t(G)$,
where $t(G)$ denotes the number of triangles in $G$.
\end{thm}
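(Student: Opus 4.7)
The plan is to combine Theorem \ref{thLambda2Upper} with Tur\'an-type lower bounds on $e(G)$ and $t(G)$ obtained from $\alpha(G)\leq s$, applied both globally and locally inside neighborhoods.

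First I would bound the diameter: a shortest path of length $d$ in $G$ contributes an independent set of size $\lceil (d+1)/2\rceil$ (take every other vertex along the geodesic), so $\alpha(G)\leq s$ forces $d\leq 2s-1$, and Theorem \ref{thLambda2Upper} then gives
\[
\lambda_{2}(D(G))\leq \frac{n(d-1)}{2}-d\leq n(s-1)-d.
\]
The equality cases of Theorem \ref{thLambda2Upper} are harmless: $G\cong K_n$ gives $\lambda_2(D(G))=-1$, so $(i)$ is trivial, while $G\cong K_{n/2,n/2}$ has $\alpha=n/2>s$ and is excluded by the hypothesis on $n$.

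Next I would exploit $\alpha(G)\leq s$ both globally and locally. Globally, $\overline{G}$ is $K_{s+1}$-free, and Tur\'an's theorem yields $m\geq n(n-s)/(2s)$. Locally, for every vertex $v$ the subgraph $G[N(v)]$ has $\alpha\leq s$, and the same Tur\'an bound on its complement gives $e(G[N(v)])\geq d_v(d_v-s)/(2s)$, which is exactly the number of triangles through $v$. Summing over $v$, noting that $3t(G)=\sum_v e(G[N(v)])$, and applying the Cauchy--Schwarz bound $\sum_v d_v^2\geq 4m^2/n$, I obtain $t(G)\geq m(2m-sn)/(3ns)$. Dividing by $m$ and substituting the global bound on $m$ produces $t(G)/m\geq (n-s-s^2)/(3s^2)$, so $3s^3\,t(G)/m\geq sn-s^2-s^3$. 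Part $(i)$ thus reduces to the arithmetic inequality $n(s-1)-d<sn-s^2-s^3$, i.e., $n>s^3+s^2-d$; using $d\leq 2s-1$ this is precisely the hypothesis $n>s^3+s^2-2s+1$, with the threshold attained exactly when the diameter is maximal.

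Part $(ii)$ should then follow by specializing $s=2$ in $(i)$: this gives $\lambda_2(D(G))<24\,t(G)/m$, and the hypothesis $n>9$ combined with the global edge bound makes $m$ large enough that $24/m\leq 1$, yielding $\lambda_2(D(G))<t(G)$ (with the borderline $n=10$ handled by noting that the disconnected extremum $2K_5$ is ruled out by connectivity). The main obstacle I foresee is the sharpness of the local Tur\'an estimate $e(G[N(v)])\geq d_v(d_v-s)/(2s)$: the constant $3s^3$ and the precise threshold $n>s^3+s^2-2s+1$ are calibrated exactly to this estimate, so any slack there would force strengthening the hypothesis on $n$.
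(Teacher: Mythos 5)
Your outline reproduces the paper's proof almost step for step (geodesic independent set giving $d\leq 2s-1$, Theorem \ref{thLambda2Upper}, the global Tur\'an bound $m\geq \frac{n(n-s)}{2s}$, the same bound applied inside each neighborhood to get $t(G,u)\geq \frac{d_u(d_u-s)}{2s}$, summing with $3t(G)=\sum_u t(G,u)$ and Cauchy--Schwarz to reach $3s^3\cdot\frac{t(G)}{m}\geq sn-s^2-s^3$), but your final reduction in $(i)$ runs an inequality the wrong way. You weaken Theorem \ref{thLambda2Upper} termwise to $\lambda_2(D(G))\leq n(s-1)-d$ and then need $n(s-1)-d<sn-s^2-s^3$, i.e.\ $n>s^3+s^2-d$. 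Since $d\leq 2s-1$ gives $s^3+s^2-d\geq s^3+s^2-2s+1$, the hypothesis $n>s^3+s^2-2s+1$ implies $n>s^3+s^2-d$ \emph{only} in the extreme case $d=2s-1$; for smaller diameter your chain stalls near the threshold. Concretely, with $s=2$, $d=2$, $n=10$ your bounds give $\lambda_2(D(G))\leq n-2=8$ and $3s^3\,t(G)/m\geq 2n-12=8$, from which nothing follows. The repair is exactly what the paper does: $\frac{n(d-1)}{2}-d$ is increasing in $d$ for $n>2$, so $\lambda_2(D(G))\leq (s-1)n-(2s-1)$ uniformly over $d\leq 2s-1$, and then the required comparison $(s-1)n-(2s-1)<sn-s^2-s^3$ is \emph{equivalent} to $n>s^3+s^2-2s+1$. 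So the idea is right and the fix is one line, but as written the step ``using $d\leq 2s-1$ this is precisely the hypothesis'' invokes monotonicity in $d$ in the wrong direction and is a genuine error, not a cosmetic one.

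Your patch for the borderline case $n=10$ in $(ii)$ is also incorrect. Connectivity does exclude the Tur\'an extremum $2K_5$, but that only improves the edge bound from $m\geq 20$ to $m\geq 21$: the complement of $K_{5,5}$ minus one edge is $2K_5$ plus an edge, which is connected, has $\alpha=2$, and $m=21<24$, so your claim that connectivity ``makes $24/m\leq 1$'' is false, and $\lambda_2(D(G))<\frac{24}{m}t(G)$ does not yield $\lambda_2(D(G))<t(G)$ by this route. (To be fair, the paper's own proof of $(ii)$ only verifies $n\geq 11$ and is silent on $n=10$, even though the theorem asserts $n>9$; the case $n=10$ can instead be closed from the intermediate bounds directly: for $s=2$, $n=10$ one gets $\lambda_2(D(G))<7$, since equality in Theorem \ref{thLambda2Upper} would force $G\cong K_n$ or $K_{5,5}$, neither of which has diameter $3$, while $t(G)\geq \frac{m(2m-20)}{60}\geq 8$ once $m\geq 21$ --- but not by forcing $m\geq 24$, which is unattainable at $n=10$.)
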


If there is some information on the diameter of a connected graph,
we can prove the following result supporting Problem \ref{ProbUpperPath}
affirmatively.

\begin{prop}\label{ThSkDiameter}
Let $G$ be a connected graph with order $n$ and diameter $d.$
For an integer $k\geq2$ and sufficiently large $n$ with respect
to $k$, if $d<\frac{2n}{3(k+2)}$ then
$S_{k}(D(G))<S_{k}(D(P_n))$.
\end{prop}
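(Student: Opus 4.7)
\medskip
\noindent\textbf{Proof plan for Proposition~\ref{ThSkDiameter}.} The plan is to upper-bound $S_k(D(G))$ using Theorem~\ref{thLambda2Upper}, to lower-bound $S_k(D(P_n))$ by Ky Fan's variational principle, and to combine the two using the hypothesis $d<\frac{2n}{3(k+2)}$.

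For the upper bound, $D(G)$ is a nonnegative symmetric matrix, so $\lambda_1(D(G))$ is at most the maximum row sum of $D(G)$, hence at most $(n-1)d$. Combined with $\lambda_i(D(G))\le\lambda_2(D(G))\le\frac{n(d-1)}{2}-d$ from Theorem~\ref{thLambda2Upper} for each $i\ge 2$, this yields
\[
S_k(D(G))\le (n-1)d+(k-1)\Big(\frac{n(d-1)}{2}-d\Big).
\]
For the lower bound, recall that $S_k(M)\ge \mathrm{tr}(U^{T}MU)$ for any $n\times k$ matrix $U$ with orthonormal columns (Ky Fan). I would take $u_1=\mathbf{1}/\sqrt{n}$ and, for $2\le i\le k$, $u_i=\frac{1}{\sqrt{2}}(e_{2i-3}-e_{2i-2})$, where $e_j$ denotes the standard basis vector corresponding to the $j$-th vertex of $P_n$. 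For $n\ge 2(k-1)$ the vectors $u_2,\ldots,u_k$ have pairwise disjoint supports and zero sums, so together with $u_1$ they form an orthonormal system. Since $u_1^{T}D(P_n)u_1=\frac{2W(P_n)}{n}=\frac{n^2-1}{3}$ with $W(P_n)=\binom{n+1}{3}$, and $u_i^{T}D(P_n)u_i=-1$ for $i\ge 2$ (as $v_{2i-3}$ and $v_{2i-2}$ are adjacent in $P_n$), Ky Fan yields
\[
S_k(D(P_n))\ge \frac{n^2-1}{3}-(k-1).
\]

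Substituting $d<\frac{2n}{3(k+2)}$ into the dominant term $\frac{(k+1)nd}{2}$ of the upper bound gives $S_k(D(G))<\frac{(k+1)n^2}{3(k+2)}+O(n)=\frac{n^2}{3}-\frac{n^2}{3(k+2)}+O(n)$, so $S_k(D(P_n))-S_k(D(G))\ge \frac{n^2}{3(k+2)}-O(n)$, which is positive once $n$ is large enough with respect to $k$. The remaining check is a routine inequality in $n$, $k$, and $d$; after expansion it reduces to $\frac{n^2}{3(k+2)}+\frac{2kn}{3(k+2)}+\frac{(k-1)n}{2}>(k-1)+\frac{1}{3}$.

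The main obstacle is conceptual rather than computational: because $P_n$ is a tree, the Graham--Lov\'asz theorem forces $\lambda_2(D(P_n)),\ldots,\lambda_n(D(P_n))$ to be negative, so one cannot lower-bound $S_k(D(P_n))$ simply by $\lambda_1(D(P_n))$. What makes the argument succeed is that the Ky Fan test subspace above loses only $k-1$ (a constant in $n$) against the Wiener-index contribution $\frac{n^2-1}{3}$, and this deficit fits comfortably inside the $\Theta(n^2/(k+2))$ surplus afforded by the diameter hypothesis.
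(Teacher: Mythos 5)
Your proof is correct, and it takes a genuinely different route from the paper on both sides of the comparison. The paper lower-bounds $S_k(D(P_n))$ by $\frac{n^2}{3}-2(k-1)$ using two external ingredients: the Ruzieh--Powers asymptotic formula for $\lambda_1(D(P_n))$ (Theorem \ref{ThRP}, with $\frac{1}{2a^2}\approx 0.347>\frac13$) together with Lemma \ref{LeLambdak-2} ($\lambda_k(D)\geq -2$ for large $n$, itself proved via Ramsey's theorem, Merris's interlacing for trees, and Cauchy interlacing). You replace both with a single self-contained application of Ky Fan's maximum principle using the test vectors $\mathbf{1}/\sqrt{n}$ and $\frac{1}{\sqrt2}(e_{2i-3}-e_{2i-2})$, which I checked are orthonormal for $n\geq 2k-2$ and give $S_k(D(P_n))\geq \frac{n^2-1}{3}-(k-1)$ --- in fact a slightly better constant than the paper's bound, and with an explicit rather than asymptotic threshold on $n$. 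On the upper-bound side, the paper invokes the Zhou--Ili\'c bound (Theorem \ref{ThZI}) for $\lambda_1(D(G))$ and, somewhat wastefully, adds $k$ copies of the $\lambda_2$ bound; you use the elementary maximum-row-sum bound $\lambda_1(D(G))\leq (n-1)d$ and the correct $k-1$ copies of Theorem \ref{thLambda2Upper}, which has the same leading term $\frac{(k+1)nd}{2}$ and so suffices. Your final reduction is also sound: the difference $S_k(D(P_n))-\bigl(\text{your upper bound}\bigr)$ is linear in $d$ with negative slope $k-\frac{(k+1)n}{2}$ for $n\geq 2$, so substituting the endpoint $d=\frac{2n}{3(k+2)}$ is legitimate (or one can simply drop the positive term $kd$), and the resulting inequality $\frac{n^2}{3(k+2)}+\frac{2kn}{3(k+2)}+\frac{(k-1)n}{2}>(k-1)+\frac13$ holds well within the stated range of $n$. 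Both arguments share the same skeleton --- Theorem \ref{thLambda2Upper} controlling $\lambda_2,\ldots,\lambda_k$, and a $\Theta(n^2/(k+2))$ surplus from the diameter hypothesis --- but yours trades cited results for elementary, effective estimates; the only blemish is the side remark attributing the inertia of tree distance matrices to Graham--Lov\'asz rather than Graham--Pollak, which does not affect the proof.
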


\section{Preliminaries}
In this section, we will list some preliminaries and prove some lemmas.
Our one main tool is Cauchy Interlacing Theorem.
\begin{thm} [Cauchy Interlacing Theorem]\label{ThCauchy}
Let $A$ be a Hermitian matrix with order $n$
and let $B$ be a principal submatrix of $A$
with order $m$. If $\lambda_1(A)\geq \lambda_2(A)\geq \cdots \geq
\lambda_n(A)$ are the eigenvalues of $A$ and $\mu_1(B)\geq
\mu_2(B)\geq \cdots \geq \mu_m(B)$ are the eigenvalues of $B$, then
$\lambda_{n-m+i}(A)\leq \mu_i(B)\leq \lambda_i(A)$ for $i=1,
\ldots, m$.
\end{thm}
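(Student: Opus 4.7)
The plan is to deduce both halves of the interlacing from the Courant--Fischer min--max characterization of eigenvalues of a Hermitian matrix. For a Hermitian matrix $A$ of order $n$, this characterization reads
\[
\lambda_i(A)=\max_{\substack{V\subseteq\mathbb{C}^n\\ \dim V=i}}\min_{\substack{x\in V\\ x\ne 0}}\frac{x^{\ast}Ax}{x^{\ast}x}=\min_{\substack{V\subseteq\mathbb{C}^n\\ \dim V=n-i+1}}\max_{\substack{x\in V\\ x\ne 0}}\frac{x^{\ast}Ax}{x^{\ast}x},
\]
which I would quote as a standard consequence of the spectral theorem.

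Next I would fix the subset $S\subseteq\{1,\dots,n\}$ with $|S|=m$ indexing the principal submatrix $B$, and let $W\subseteq\mathbb{C}^n$ be the $m$-dimensional coordinate subspace of vectors supported on $S$. The crucial observation is that the obvious isomorphism $W\cong\mathbb{C}^m$ identifies the restriction of the quadratic form of $A$ to $W$ with that of $B$: for every $x\in W$, writing $\tilde{x}\in\mathbb{C}^m$ for its restriction to $S$, one has $x^{\ast}Ax=\tilde{x}^{\ast}B\tilde{x}$ and $x^{\ast}x=\tilde{x}^{\ast}\tilde{x}$. Courant--Fischer applied to $B$ therefore yields the same max--min and min--max formulas for $\mu_i(B)$, except that the competing subspaces $V$ are now restricted to lie inside $W$.

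Both inequalities then fall out by comparing the domains of optimization. For the upper bound $\mu_i(B)\le\lambda_i(A)$, the max in the formula for $\lambda_i(A)$ ranges over all $i$-dimensional subspaces of $\mathbb{C}^n$, a strictly larger collection than the $i$-dimensional subspaces of $W$ used to compute $\mu_i(B)$; enlarging the feasible set of a max can only increase its value. For the lower bound $\lambda_{n-m+i}(A)\le\mu_i(B)$, I would instead use the dual form: $\lambda_{n-m+i}(A)$ is the minimum of $\max_{x}x^{\ast}Ax/x^{\ast}x$ over $(m-i+1)$-dimensional subspaces of $\mathbb{C}^n$, while $\mu_i(B)$ is the same quantity restricted to subspaces contained in $W$, so shrinking the feasible set of a min can only increase its value.

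The only conceptual step is the identification of quadratic forms on $W$ and on $\mathbb{C}^m$; once that is in place, the interlacing reduces to a pure feasible-set inclusion argument in Courant--Fischer, and I would not expect any real obstacle. The argument is simply the standard textbook proof, and the entire write-up is essentially bookkeeping.
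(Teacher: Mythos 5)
Your proposal is correct: the identification of the quadratic form of $A$ restricted to the coordinate subspace $W$ with that of $B$, followed by the feasible-set comparison in both the max--min form (giving $\mu_i(B)\le\lambda_i(A)$) and the dual min--max form with the dimension count $n-(n-m+i)+1=m-i+1$ (giving $\lambda_{n-m+i}(A)\le\mu_i(B)$), is exactly the standard Courant--Fischer argument, and all the bookkeeping checks out. The paper itself states this theorem as a known tool and gives no proof, so there is nothing to compare against; your write-up is the canonical textbook proof and is sound as stated.
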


Another tool is the famous Ramsey Theorem, which has already turned
out to be powerful for problems in spectral graph theory.
For example, see \cite{ZC14} due to Zhang and Cao.

\begin{thm}[Ramsey \cite{R30}]\label{ThRamsey}
Given any positive integers $k$ and $l$, there exists
a smallest integer $R(k,l)$ such that every graph on $R(k, l)$
vertices contains either a clique of $k$ vertices or an independent
set of $l$ vertices.
\end{thm}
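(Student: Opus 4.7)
The plan is to prove both the existence of $R(k,l)$ and the classical recursive upper bound $R(k,l)\le R(k-1,l)+R(k,l-1)$ simultaneously, by induction on $k+l$. This is the standard Erd\H{o}s--Szekeres argument, and it gives the qualitative existence statement in the exact form required.

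First I would establish the base cases $R(k,1)=R(1,l)=1$: a single vertex is vacuously both a clique of size $1$ and an independent set of size $1$. One may equivalently start from the slightly more substantive $R(k,2)=k$ and $R(2,l)=l$, which hold because on $l$ vertices either some edge is present (yielding $K_2$) or the whole vertex set is independent of size $l$.

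For the inductive step, I would assume $R(k-1,l)$ and $R(k,l-1)$ both exist as finite integers, set $N=R(k-1,l)+R(k,l-1)$, and show that every graph $G$ on $N$ vertices contains either a $K_k$ or an independent set of size $l$. Pick any vertex $v\in V(G)$ and split the remaining $N-1$ vertices into $A=N_G(v)$ and $B=V(G)\setminus(N_G(v)\cup\{v\})$. Since $|A|+|B|=N-1$, the pigeonhole principle yields $|A|\ge R(k-1,l)$ or $|B|\ge R(k,l-1)$. In the first case, applying the induction hypothesis to $G[A]$ produces either the desired independent set of size $l$, or a clique of size $k-1$ that combines with $v$ (adjacent to every vertex of $A$) to form a $K_k$. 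The second case is completely symmetric: $G[B]$ contains either the desired $K_k$, or an independent set of size $l-1$ that combines with $v$ (nonadjacent to every vertex of $B$) to give an independent set of size $l$.

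The only obstacle is mild bookkeeping: verifying that the base cases cover precisely the situations in which the induction hypothesis cannot legitimately be invoked (namely $k=1$ or $l=1$), and that the two symmetric subcases are dispatched uniformly. No quantitative bound beyond the existence of $R(k,l)$ is needed for the later applications in the paper, where Ramsey's theorem is used purely as a finiteness black box.
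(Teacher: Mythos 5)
The paper does not prove this statement at all: it is quoted as a classical black-box result with a citation to Ramsey's 1930 paper, so there is no internal proof to compare against. Your argument is the standard Erd\H{o}s--Szekeres double induction and it is correct: the base cases $R(k,1)=R(1,l)=1$ are sound, and the pigeonhole step works because $|A|+|B|=R(k-1,l)+R(k,l-1)-1$ forces $|A|\geq R(k-1,l)$ or $|B|\geq R(k,l-1)$, after which the two symmetric cases (extending a $(k-1)$-clique in $G[A]$ by $v$, or an independent $(l-1)$-set in $G[B]$ by $v$) go through exactly as you describe. One pedantic point worth a sentence in a fully written-up version: the statement asserts the existence of a \emph{smallest} such integer, so after exhibiting one finite $N$ with the property you should note that the property is monotone in the number of vertices (any graph on more than $N$ vertices contains an induced subgraph on $N$ vertices, whose clique or independent set persists in the larger graph), whence the set of valid integers is a nonempty upward-closed set of positive integers and has a least element by well-ordering. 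You correctly observe that the paper only uses finiteness of $R(k-1,k-1)$ (in Lemma \ref{LeLambdak-2} and the proof of Theorem \ref{ThGeneralSk(G)}(ii)), so no quantitative bound is needed.
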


The third one is a theorem due to Merris, which helps us
to obtain bounds of distance eigenvalues.
\begin{thm}[Merris \cite{M90}]\label{ThMerrisTree}
Let $G$ be a tree of order $n$. Let
$\lambda_1(D(G))\geq\cdots\geq \lambda_n(D(G))$
be the eigenvalues of $D(G)$ and let
$\mu_1\geq \mu_2\geq\cdots\geq\mu_{n-1}\geq0$
be the eigenvalues of $L(G)$. Then
$$
0>\frac{-2}{\mu_1}>\lambda_2(D(G))\geq\frac{-2}{\mu_2}\geq\cdots\geq\lambda_{n-1}(D(G))\geq\frac{-2}{\mu_{n-1}}\geq\lambda_{n}(D(G)).
$$
\end{thm}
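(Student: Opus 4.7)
The plan is to derive the interlacing from the Graham--Lov\'{a}sz inverse formula for the distance matrix of a tree, combined with a rank-one perturbation version of Weyl's inequalities. Graham and Lov\'{a}sz proved that for any tree $G$ on $n\geq 2$ vertices the distance matrix is invertible, with
\[
D(G)^{-1} \;=\; -\tfrac{1}{2}L(G) \;+\; \tfrac{1}{2(n-1)}\,\tau\tau^{T},
\]
where $\tau=(2-d_1,\ldots,2-d_n)^{T}$ records the degree deficits. Rearranging,
\[
L(G) \;=\; -2\,D(G)^{-1} \;+\; \tfrac{1}{n-1}\,\tau\tau^{T},
\]
and since $\mathbf{1}^{T}\tau = 2n-2|E(G)| = 2\neq 0$ the perturbation $\tfrac{1}{n-1}\tau\tau^{T}$ is positive semidefinite of rank exactly one. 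This algebraic identity is the engine of the proof.

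Next I would apply the rank-one PSD perturbation form of Weyl's inequality: if $A = B + uu^{T}$ for some $u\in\mathbb{R}^{n}$, then $\lambda_{i+1}(A)\leq \lambda_{i}(B) \leq \lambda_{i}(A)$ for every admissible $i$, with eigenvalues listed in non-increasing order. Set $A = L(G)$ and $B = -2D(G)^{-1}$. The remaining ingredient is the classical Graham--Pollak fact that the distance matrix of a tree has exactly one positive eigenvalue, namely $\lambda_{1}(D(G))>0$, and $n-1$ negative eigenvalues $\lambda_{2}(D(G))\geq\cdots\geq\lambda_{n}(D(G))$. Consequently, the spectrum of $-2D(G)^{-1}$ in non-increasing order is
\[
-\tfrac{2}{\lambda_{2}(D(G))} \;\geq\; -\tfrac{2}{\lambda_{3}(D(G))} \;\geq\; \cdots \;\geq\; -\tfrac{2}{\lambda_{n}(D(G))} \;>\; 0 \;>\; -\tfrac{2}{\lambda_{1}(D(G))}.
\]

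Substituting these into the Weyl interlacing and matching indices against the Laplacian spectrum $\mu_{1}\geq\cdots\geq\mu_{n-1}>\mu_{n}=0$ yields, for $i=2,\ldots,n-1$, the two-sided bound $-2/\lambda_{i+1}(D(G)) \leq \mu_{i} \leq -2/\lambda_{i}(D(G))$. Because each $\lambda_{j}(D(G))$ with $j\geq 2$ is negative, multiplying through flips the direction and produces the interior chain $\lambda_{i+1}(D(G))\leq -2/\mu_{i}\leq \lambda_{i}(D(G))$. The $i=1$ case of the same interlacing supplies the endpoint inequality $\lambda_{2}(D(G)) \leq -2/\mu_{1}$, and the $i=n-1$ case supplies $-2/\mu_{n-1}\geq \lambda_{n}(D(G))$; the isolated eigenvalue $-2/\lambda_{1}(D(G))<0$ pairs with the kernel eigenvalue $\mu_{n}=0$ of $L(G)$, which is why $\lambda_{1}(D(G))$ does not appear in the stated chain. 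I expect the main technical obstacle to be the \emph{strict} endpoint inequality $-2/\mu_{1} > \lambda_{2}(D(G))$: this requires the perturbation vector $\tau$ to have a nonzero component along the Perron eigenvector of $D(G)$ associated with $\lambda_{1}(D(G))$. Since $D(G)$ has non-negative entries and is irreducible, that eigenvector has strictly positive coordinates, and the identity $\mathbf{1}^{T}\tau = 2$ then forces $\tau$ to have a nonzero projection onto it; carefully extracting the strict inequality from this non-orthogonality (and excluding the degenerate case $n=2$, where equality is attained) is where most of the care will be concentrated.
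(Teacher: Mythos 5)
You should first note that the paper does not prove this statement at all: it is quoted as a known theorem of Merris \cite{M90}, so there is no internal proof to compare against. Your route --- the Graham--Lov\'asz formula $D(G)^{-1}=-\tfrac12 L(G)+\tfrac{1}{2(n-1)}\tau\tau^{T}$ rearranged to $L(G)=-2D(G)^{-1}+\tfrac{1}{n-1}\tau\tau^{T}$, plus the rank-one PSD form of Weyl's inequalities --- is a legitimate alternative to Merris's original argument, which instead uses the incidence-matrix identity $Q D Q^{T}=-2I_{n-1}$ to realize $-2(QQ^{T})^{-1}$ as a compression of $D$ onto $\mathbf{1}^{\perp}$ and then applies Cauchy interlacing. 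For all the \emph{non-strict} inequalities your bookkeeping is correct: with $B=-2D^{-1}$ having ordered spectrum $\beta_{j}=-2/\lambda_{j+1}(D)$ for $j\le n-1$ and $\beta_{n}=-2/\lambda_{1}(D)<0$ (Graham--Pollak inertia), the Weyl bounds $\beta_{i}\le\mu_{i}\le\beta_{i-1}$ and the sign flips (valid since $\mu_{i}>0$ for $i\le n-1$ and $\lambda_{j}(D)<0$ for $j\ge2$) deliver exactly $\lambda_{i+1}(D)\le -2/\mu_{i}\le\lambda_{i}(D)$ for $2\le i\le n-1$ and $\lambda_{2}(D)\le-2/\mu_{1}$. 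Your observation that equality occurs at $n=2$ (for $P_2$, $\lambda_2=-1=-2/\mu_1$), so the strict version needs $n\ge3$, is a genuine and correct catch.

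The gap is the strict endpoint inequality $-2/\mu_{1}>\lambda_{2}(D)$, i.e.\ $\mu_{1}>\beta_{1}=-2/\lambda_{2}(D)$, and your proposed mechanism for it is wrong on two counts. First, it targets the wrong eigenspace: whether $\lambda_{1}(B+c\,\tau\tau^{T})$ strictly exceeds $\beta_{1}$ is governed by the projection of $\tau$ onto the \emph{top} eigenspace of $B$, which is the $\lambda_{2}(D)$-eigenspace of $D$; the Perron eigenvector of $D$ is the eigenvector of $B$ for its \emph{bottom} eigenvalue $-2/\lambda_{1}(D)$, which pairs with $\mu_{n}=0$ and is irrelevant here. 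Second, the inference ``$\mathbf{1}^{T}\tau=2\neq0$ plus positivity of the Perron vector forces $\tau^{T}p\neq0$'' is invalid: non-orthogonality to $\mathbf{1}$ does not imply non-orthogonality to a different positive vector (e.g.\ $\tau=(3,-1)^T$, $p\propto(1,3)^T$). Worse, even the corrected sufficient condition fails in general: for the star $K_{1,n-1}$ the $\lambda_{2}=-2$ eigenspace is $\{x:\ x_{\mathrm{center}}=0,\ \sum_i x_i=0\}$, and $\tau=(3-n,1,\ldots,1)^{T}$ is orthogonal to all of it, yet the strict inequality holds ($\mu_{1}=n>1=-2/\lambda_{2}$) because the largest Laplacian eigenvalue arises from the secular root climbing past $\beta_{1}$ from the lower eigenvalues, not from perturbing $\beta_{1}$'s eigenvector. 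So strictness cannot be localized to a single non-orthogonality condition. The honest version of your argument would analyze the equality case: $\mu_{1}=\beta_{1}$ forces, via $\mu_{1}=w^{T}Bw+\tfrac{1}{n-1}(\tau^{T}w)^{2}\le\beta_{1}+\tfrac{1}{n-1}(\tau^{T}w)^{2}$ for a unit eigenvector $w$ of $L$, a vector satisfying simultaneously $Lw=\mu_{1}w$, $Dw=\lambda_{2}(D)\,w$ and $\tau^{T}w=0$, and one must then rule out such a common eigenvector for every tree on $n\ge3$ vertices (it does exist for $P_{2}$, matching your degenerate case). That exclusion is the real content of the strict inequality, and the proposal does not contain it.
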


\begin{lem}\label{Lediamorder}
Let $G$ be a graph of order $n$. If $\Delta(G)\leq l$ and
$diam(G)\leq d$, then
$$
n\leq 1+l+l(l-1)+l(l-1)^2+\cdots+l(l-1)^{d-1}.
$$
\end{lem}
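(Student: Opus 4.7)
The plan is to prove this classical Moore-type bound by a breadth-first-search argument from an arbitrary vertex. Fix any vertex $v \in V(G)$ and, for each $i \geq 0$, let $N_i(v) := \{u \in V(G) : d_G(v,u) = i\}$ be the set of vertices at distance exactly $i$ from $v$. Since $\mathrm{diam}(G) \le d$, every vertex lies in some $N_i(v)$ with $0 \le i \le d$, so $n = \sum_{i=0}^{d}|N_i(v)|$, and it suffices to bound each $|N_i(v)|$.

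First I would observe $|N_0(v)| = 1$ and $|N_1(v)| = |N(v)| \le \Delta(G) \le l$. For $i \ge 2$, I would argue that each vertex $u \in N_i(v)$ must be adjacent to at least one vertex in $N_{i-1}(v)$ (take a shortest $v$–$u$ path; its penultimate vertex lies in $N_{i-1}(v)$). Thus every vertex in $N_i(v)$ is counted among the neighbours of the vertices in $N_{i-1}(v)$. Each $w \in N_{i-1}(v)$ has at most $l$ neighbours, but at least one of these neighbours lies in $N_{i-2}(v)$ (again via a shortest path from $v$ to $w$), so $w$ contributes at most $l-1$ vertices to $N_i(v)$. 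This yields the recursive estimate $|N_i(v)| \le (l-1)\,|N_{i-1}(v)|$ for $i \ge 2$.

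Iterating gives $|N_i(v)| \le l(l-1)^{i-1}$ for every $1 \le i \le d$, and summing from $i=0$ to $d$ produces
\[
n \;=\; \sum_{i=0}^{d} |N_i(v)| \;\le\; 1 + l + l(l-1) + l(l-1)^2 + \cdots + l(l-1)^{d-1},
\]
which is exactly the claimed inequality.

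There is no serious obstacle here; the only subtle point is the step for $i \ge 2$, where one must be careful to use a shortest path to guarantee that each vertex of $N_{i-1}(v)$ has at least one neighbour already accounted for in $N_{i-2}(v)$, so that only $l-1$ of its $\le l$ neighbours can be new. The cases $l = 1$ and $l = 2$ are degenerate but the bound remains valid (with the convention $0^0 = 1$ if needed in the $l=1$ case, forcing $n \le 2$).
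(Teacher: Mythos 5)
Your proof is correct: the paper states this lemma without any proof (it is the classical Moore bound), and your breadth-first layering argument --- $|N_0(v)|=1$, $|N_1(v)|\leq l$, and $|N_i(v)|\leq (l-1)|N_{i-1}(v)|$ for $i\geq 2$ because each vertex of $N_{i-1}(v)$ spends at least one of its $\leq l$ edges on a neighbour in $N_{i-2}(v)$ --- is exactly the standard argument the paper implicitly relies on. You correctly handle the one subtle step (using shortest paths to guarantee the back-edge into the previous layer), so there is nothing to flag.
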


Now we shall give the proof of Lemma
\ref{LeLambdak-2}, whose proof relies on Theorems \ref{ThCauchy},
\ref{ThRamsey} and \ref{ThMerrisTree}. Part of technique is inspired
by Zhang and Cao \cite{ZC14}.

\begin{lem}\label{LeLambdak-2}
Let $G$ be a connected graph of order $n$. For any
integer $k\geq 2$, if $n$ is sufficiently large with
respect to $k$ then $\lambda_{k}(D(G))\geq -2$.
\end{lem}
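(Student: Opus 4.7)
The plan is a Ramsey-theoretic case analysis, in each case applying Cauchy interlacing (Theorem \ref{ThCauchy}) to a carefully chosen principal submatrix of $D(G)$. I distinguish three structural cases for $G$.

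First, suppose $G$ contains a clique $K_{k+1}$. The principal submatrix of $D(G)$ induced on these $k+1$ vertices is $J_{k+1} - I_{k+1}$, whose spectrum is $\{k, -1, \ldots, -1\}$ and whose $k$-th largest eigenvalue is $-1$; Cauchy interlacing gives $\lambda_k(D(G)) \geq -1 > -2$. Second, suppose $G$ contains an induced copy of $K_{1,k}$. Its $k$ leaves are pairwise non-adjacent in $G$ but share a common neighbor (the star's center), so every two of them are at $G$-distance exactly $2$. The corresponding $k \times k$ principal submatrix of $D(G)$ is therefore $2(J_k - I_k)$, with $k$-th largest eigenvalue equal to $-2$, and Cauchy interlacing again yields $\lambda_k(D(G)) \geq -2$.

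The main obstacle is the remaining case, where $G$ contains neither $K_{k+1}$ nor an induced $K_{1,k}$. For every vertex $v$, both $\omega(G[N(v)]) \leq k - 1$ (otherwise $\{v\}$ together with such a clique would give a $K_{k+1}$ in $G$) and $\alpha(G[N(v)]) \leq k - 1$ (otherwise $v$ and such an independent set would form an induced $K_{1,k}$). Ramsey's theorem (Theorem \ref{ThRamsey}) then forces $|N(v)| \leq R(k,k) - 1$, so $\Delta(G)$ is bounded above by a constant $\Delta_0$ depending only on $k$. By Lemma \ref{Lediamorder}, the order $n$ is bounded by a function of the diameter $d$ and $\Delta_0$ alone, so for $n$ sufficiently large in terms of $k$ we must have $d + 1 \geq \lceil 3k/2 \rceil$.

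Now fix a geodesic $v_0 v_1 \cdots v_d$ in $G$ realizing the diameter. Since any subpath of a geodesic is itself a shortest path, $d_G(v_i, v_j) = |i - j|$ for all $i, j$, and the principal submatrix $D(G)[\{v_0, \ldots, v_d\}]$ coincides with $D(P_{d+1})$. Cauchy interlacing gives $\lambda_k(D(G)) \geq \lambda_k(D(P_{d+1}))$, and Merris's theorem (Theorem \ref{ThMerrisTree}) applied to the tree $P_{d+1}$ yields $\lambda_k(D(P_{d+1})) \geq -2/\mu_k(L(P_{d+1}))$. Using the well-known closed form $\mu_j(L(P_{d+1})) = 2 + 2\cos(j\pi/(d+1))$ for $j = 1, \ldots, d$, one has $\mu_k(L(P_{d+1})) \geq 1$ whenever $d + 1 \geq 3k/2$; hence $\lambda_k(D(P_{d+1})) \geq -2$ and therefore $\lambda_k(D(G)) \geq -2$, completing the argument. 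The crux is the interplay in this third case between Ramsey (to bound the maximum degree), the Moore-type bound of Lemma \ref{Lediamorder} (to force a long diameter), and Merris's bound (to handle the induced path submatrix).
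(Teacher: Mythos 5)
Your proof is correct and takes essentially the same route as the paper: Ramsey's theorem yields either a clique or an induced star (each handled by Cauchy interlacing on the principal submatrices $J-I$ and $2(J_k-I_k)$, respectively), and in the remaining case a bounded maximum degree together with Lemma \ref{Lediamorder} forces a long geodesic, whose path distance submatrix is controlled via Merris's theorem. The differences are only cosmetic: you organize the case split contrapositively (on the presence of $K_{k+1}$ or an induced $K_{1,k}$ rather than on $\Delta(G)$ versus a Ramsey number), and your diameter threshold $d+1\geq 3k/2$ gives $\lambda_k\geq -2$ where the paper's $d\geq 2k$ gives $\lambda_k\geq -1$, either of which suffices.
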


\begin{proof}
We divide the proof into two cases.

\vspace{3mm}
\noindent{\bf Case 1.} $\Delta(G)\geq R(k-1, k-1)$

\vspace{2mm} Let $v\in V(G)$ with $d_G(v)=\Delta(G)$. By Theorem
\ref{ThRamsey}, $G':=G[N(v)]$ either contains a clique $A$ of
size $k-1$ or an independent set $B$ of size $k-1$.

If the first case occurs, then $H=G[A\cup
\{v\}]\cong K_{k}$. By Theorem \ref{ThCauchy} and the inequality
$k\geq 2$,
$\lambda_k(D(G))\geq\lambda_{k}(D(H))=-1$.

If the second case occurs, then
$H=G[B\cup\{v\}]\cong K_{1, k-1}$. By Theorem \ref{ThCauchy}
and the inequality $k\geq 2$, $\lambda_k(D(G))\geq \lambda_{k}(D(H))=-2$.

\vspace{3mm}\noindent{\bf Case 2.} $\Delta(G)< R(k-1, k-1)$

\vspace{2mm} Take $l=R(k-1, k-1)-1$ and $d\geq 2k$. By
Lemma \ref{Lediamorder}, if
$n\geq 1+l+l(l-1)+l(l-1)^2+\cdots+l(l-1)^{d-1}$,
then $diam(G)\geq d+1$. Thus, the distance matrix $D'$
of $P_d$ is a principle submatrix of $D$. Note that
$\mu_i(P_d) = 2 + 2 \cos\frac{i\pi}{d}$ for $i = 1,\cdots, d.$
Thus, by Theorems \ref{ThCauchy} and \ref{ThMerrisTree},
we have $\lambda_{k}(D(G))\geq \lambda_{k}(D(P_d))\geq\frac{-2}{\mu_k(P_d)}\geq-1~~\mbox{for $2k\leq d$}$.
The proof is complete.
\end{proof}

The following three theorems are used in the proof of Proposition \ref{ThSkDiameter}.
\begin{thm}[Merris \cite{M90}]\label{ThMerrisTDia}
Let $T$ be a tree with diameter $d$. Then
$\lambda_{\lfloor\frac{d}{2}\rfloor}(D(T))>-1$.
\end{thm}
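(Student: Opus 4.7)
The plan is to combine Cauchy interlacing (Theorem~\ref{ThCauchy}) with Merris's inequality (Theorem~\ref{ThMerrisTree}), using a diameter path of $T$ as the bridge. First, I would fix a path $P$ of length $d$ in $T$, so $P\cong P_{d+1}$. Because $T$ is a tree, the unique path between any two vertices $u,v\in V(P)$ lies along $P$, so $d_T(u,v)=d_P(u,v)$. Consequently, the principal submatrix of $D(T)$ indexed by $V(P)$ is exactly the distance matrix $D(P_{d+1})$ of the isolated path, and Theorem~\ref{ThCauchy} yields
$$\lambda_{\lfloor d/2\rfloor}(D(T))\;\ge\;\lambda_{\lfloor d/2\rfloor}(D(P_{d+1})).$$

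Next, I would dispose of the small-diameter regime. If $d\le 3$, then $\lfloor d/2\rfloor\le 1$ and the claim reduces to $\lambda_1(D(T))>-1$, which is immediate from Perron--Frobenius applied to the non-negative irreducible matrix $D(T)$ (indeed $\lambda_1(D(T))>0$). So assume $d\ge 4$, whence $\lfloor d/2\rfloor\ge 2$. Merris's theorem applied to the tree $P_{d+1}$ then gives
$$\lambda_{\lfloor d/2\rfloor}(D(P_{d+1}))\;\ge\;\frac{-2}{\mu_{\lfloor d/2\rfloor}(P_{d+1})},$$
where $\mu_1\ge\cdots\ge\mu_{d+1}=0$ are the Laplacian eigenvalues of $P_{d+1}$.

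Finally, I would use the closed form $\mu_i(P_{d+1})=2+2\cos(i\pi/(d+1))$ (already invoked in the proof of Lemma~\ref{LeLambdak-2}) to verify $\mu_{\lfloor d/2\rfloor}(P_{d+1})>2$. This is equivalent to $\cos(\lfloor d/2\rfloor\pi/(d+1))>0$, i.e.\ $2\lfloor d/2\rfloor<d+1$, which holds for both parities of $d$ (for $d=2m$ the left side is $2m<2m+1$, and for $d=2m+1$ it is $2m<2m+2$). Hence $-2/\mu_{\lfloor d/2\rfloor}(P_{d+1})>-1$, and chaining the inequalities produces $\lambda_{\lfloor d/2\rfloor}(D(T))>-1$.

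I do not foresee a serious obstacle; the argument is essentially a short interlacing step followed by a trigonometric check. The only point demanding care is ensuring that the diameter path is an \emph{isometric} subgraph of $T$ --- which is automatic in a tree --- so that $D(P_{d+1})$ genuinely appears as a principal submatrix of $D(T)$ and Cauchy interlacing applies in the direction we need.
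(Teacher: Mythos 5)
The paper does not prove this statement at all: it is imported verbatim from Merris \cite{M90} as a known result, so there is no internal proof to compare against. Judged on its own merits, your reconstruction is correct and is essentially the canonical derivation (and, as far as the tools go, the same route Merris's interlacing machinery suggests): a diametral path in a tree is isometric, so $D(P_{d+1})$ sits inside $D(T)$ as a principal submatrix and Theorem~\ref{ThCauchy} gives $\lambda_{\lfloor d/2\rfloor}(D(T))\ge\lambda_{\lfloor d/2\rfloor}(D(P_{d+1}))$; then Theorem~\ref{ThMerrisTree} applied to $P_{d+1}$ yields $\lambda_k(D(P_{d+1}))\ge -2/\mu_k(P_{d+1})$ for $2\le k\le d$, and your trigonometric check $2\lfloor d/2\rfloor<d+1$ correctly shows $\mu_{\lfloor d/2\rfloor}(P_{d+1})=2+2\cos\bigl(\lfloor d/2\rfloor\pi/(d+1)\bigr)>2$, giving the strict bound $>-1$ even though Merris's chain is only weak at that index. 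Your case split is also the right one, since the chain in Theorem~\ref{ThMerrisTree} only controls $\lambda_k$ for $k\ge 2$, and for $d\in\{2,3\}$ the claim is just $\lambda_1(D(T))>0>-1$. The only pedantic loose end is $d\le 1$, where $\lfloor d/2\rfloor=0$ and the statement is vacuous rather than an instance of $\lambda_1>-1$; this does not affect the substance of the argument.
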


\begin{thm}[Zhou and Ili\'c \cite{ZI10}]\label{ThZI}
Let $G$ be a connected graph on $n$ vertices with diameter $d$,
minimum degree $\delta_1$ and the second minimum degree
$\delta_2$. Then
$$\lambda_1(D(G))\leq \sqrt{[dn-\frac{d(d-1)}{2}-1-\delta_1(d-1)][dn-\frac{d(d-1)}{2}-1-\delta_2(d-1)]},$$
where the equality holds if and only if $G$ is a regular graph
with $d\leq 2$.
\end{thm}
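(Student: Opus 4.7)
The plan is to combine a combinatorial upper bound on each vertex's transmission (row sum of $D(G)$) with a Perron--Frobenius style comparison that extracts $\lambda_1(D(G))$ from the two largest transmissions simultaneously.

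\emph{Step 1 (Transmission bound).} For a vertex $v$ of degree $d(v)$, I would show
\[
T(v):=\sum_{u\in V(G)}d_G(u,v)\;\le\;B(d(v)):=dn-\tfrac{d(d-1)}{2}-1-d(v)(d-1).
\]
The extremal configuration that saturates $B(d(v))$ is: $d(v)$ vertices at distance $1$ from $v$; one vertex at each of the distances $2,3,\ldots,d$ along a shortest path realising the diameter; and the remaining $n-d(v)-d$ vertices at distance exactly $d$. Summing $d(v)+(2+3+\cdots+d)+(n-d(v)-d)\cdot d$ and simplifying gives exactly $B(d(v))$; a short case distinction confirms the bound also when the eccentricity of $v$ is strictly smaller than $d$.

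\emph{Step 2 (Spectral comparison).} Let $x>0$ be the Perron eigenvector of $D(G)$. Choose $u$ with $x_u=\max_w x_w$ and $u'\neq u$ with $x_{u'}=\max_{w\ne u}x_w$. Since $d_{ww}=0$, the eigen-equations yield
\[
\lambda_1(D)\,x_u=\sum_{w\neq u}d_{uw}x_w\;\le\;T(u)\,x_{u'},\qquad
\lambda_1(D)\,x_{u'}=\sum_{w\neq u'}d_{u'w}x_w\;\le\;T(u')\,x_u.
\]
Multiplying these two inequalities and cancelling $x_u x_{u'}>0$ gives $\lambda_1(D(G))^{2}\le T(u)T(u')\le T_{(1)}T_{(2)}$, where $T_{(1)}\ge T_{(2)}$ are the two largest transmissions in $G$.

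\emph{Step 3 (Conclusion and equality).} Because $B$ is strictly decreasing in its argument, $T_{(1)}T_{(2)}\le B(\delta_1)B(\delta_2)$, which is exactly the asserted inequality. For equality, the Perron argument in Step~2 forces $x$ to be constant, so all transmissions coincide and $G$ is transmission regular; strict monotonicity of $B$ together with equality in Step~1 then forces $G$ to be degree regular as well. A final check shows that for a regular graph with diameter $d\ge 3$ the extremal configuration of Step~1 is impossible (on any diametral path too many non-neighbours of a given vertex are forced to lie at distance strictly less than $d$), so only $d\le 2$ can occur.

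\emph{Main obstacle.} The inequality itself is reasonably clean, but the equality characterisation is the delicate part: one has to certify that simultaneous tightness of both the combinatorial transmission bound and the Perron-Frobenius comparison can happen only for regular graphs of diameter at most~$2$, which requires a careful structural argument ruling out large-diameter regular graphs.
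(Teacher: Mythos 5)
The paper does not prove this statement at all: Theorem~\ref{ThZI} is imported verbatim from Zhou and Ili\'c \cite{ZI10} and used as a black box in the proof of Proposition~\ref{ThSkDiameter}, so there is no in-paper proof to compare against. Judged on its own merits, your reconstruction follows what is in fact the standard route to bounds of this shape (and essentially the route of the cited source): the transmission bound $T(v)\le dn-\frac{d(d-1)}{2}-1-d(v)(d-1)$, multiplied through the Perron eigen-equations at the largest and second-largest entries. The skeleton is correct, but three spots need tightening. First, in Step~3 the monotonicity of $B$ alone does not yield $T_{(1)}T_{(2)}\le B(\delta_1)B(\delta_2)$; the clean fix is to stay with $T(u)T(u')$ for the two \emph{distinct} vertices $u\ne u'$ of Step~2: if, say, $d(u)\le d(u')$, then $d(u)\ge\delta_1$ and $d(u')\ge\delta_2$, so $T(u)T(u')\le B(\delta_1)B(\delta_2)$. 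Second, the ``if and only if'' requires sufficiency, which you never verify: in any diameter-$2$ graph one has exactly $T(v)=2n-2-d(v)$, so a $k$-regular graph with $d=2$ is transmission regular with $\lambda_1(D)=2n-2-k$, matching the bound ($d=1$ gives $K_n$). Third, your exclusion of regular graphs with $d\ge 3$ is only a hint; it can be completed by the layer argument: if every vertex attains the bound, the distance layers from a diametral vertex $v$ are $N(v)$, singletons $u_2,\dots,u_{d-1}$, and a final layer $L$ at distance $d$; any $w\in L$ is adjacent to $u_{d-1}$ and has $u_{d-2}$ at distance $2$, so ``exactly one vertex at distance $2$ from $w$'' forces $L$ to be a clique, whence $\deg(w)=|L|$ while $\deg(u_{d-1})\ge |L|+1$, contradicting regularity. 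With these routine patches your argument is complete and agrees with the original proof in \cite{ZI10}.
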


\begin{thm}[Ruzieh and Powers \rm{\cite[Corollary 2.2]{RP90}}]\label{ThRP}
The distance spectral radius of the path $P_n$ is
$\lambda_1(D(P_n))=\frac{n^2}{2a^2}-\frac{2+a^2}{6a^2}+O(\frac{1}{n^2})$,
where a is the root of a $\tanh a=1.$ $(a\doteq 1.199679.)$
\end{thm}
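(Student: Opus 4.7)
The plan is to pass to a continuous eigenvalue problem, solve it explicitly to extract the leading order, and then recover the constant correction by an Euler--Maclaurin refinement. Since $D(P_n)_{ij}=|i-j|$, the equation $D(P_n)x=\lambda x$ reads $\sum_{j=1}^n |i-j|\, x_j = \lambda\, x_i$. Setting $t=i/n$ and $x_i\approx f(t)$ for a smooth $f:[0,1]\to\mathbb{R}$, the left-hand side is a Riemann approximation to $n^2\int_0^1|t-s|f(s)\,ds$, so to leading order the eigenproblem becomes
\[
(Kf)(t):=\int_0^1|t-s|f(s)\,ds=\mu f(t),\qquad \mu=\lambda/n^2.
\]

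I would then solve $Kf=\mu f$ in closed form. Splitting the kernel at $s=t$ and differentiating twice gives $(Kf)''(t)=2f(t)$, hence $\mu f''=2f$; for the top eigenvalue $\mu>0$, and the symmetry of the kernel about $t=1/2$ forces the Perron-type eigenfunction to be even, so $f(t)=\cosh(b(t-\tfrac12))$ with $b=\sqrt{2/\mu}$. Plugging this back into the boundary condition $(Kf)(0)=\mu f(0)$ and using $\int_{-1/2}^{1/2}\cosh(bu)\,du=2\sinh(b/2)/b$ collapses everything to the single transcendental relation $(b/2)\tanh(b/2)=1$. Setting $a=b/2$ yields $a\tanh a=1$ and $\mu=2/b^2=1/(2a^2)$, giving the announced leading term $n^2/(2a^2)$.

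For the constant correction I would apply Euler--Maclaurin to the error $\sum_{j=1}^n |i-j|f(j/n) - n^2\int_0^1|t-s|f(s)\,ds$. The kernel $|t-s|$ is smooth except for a corner at $s=t$, so splitting the sum at $j=i$ and applying the trapezoidal rule to each piece produces endpoint corrections of order $n$ from $j=1,n$ together with a corner correction of order $n$ at $j=i$. Substituting $f(t)=\cosh(2a(t-\tfrac12))$ and simplifying repeatedly with $a\tanh a=1$ collapses these into the stated constant $-(2+a^2)/(6a^2)$, while the remaining Euler--Maclaurin terms contribute $O(1/n^2)$ after normalization.

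The main obstacle is the rigorous control of the perturbation from the continuous to the discrete eigenproblem: one must show, by a minimax or resolvent argument, that the finite-dimensional Perron eigenvector is sufficiently well approximated by samples of $f$ for the Euler--Maclaurin correction to carry over at the claimed precision, rather than being drowned by the perturbation error. Once this step is in place, the explicit computation above furnishes the expansion $\lambda_1(D(P_n))=\frac{n^2}{2a^2}-\frac{2+a^2}{6a^2}+O(1/n^2)$ with $a$ the unique positive root of $a\tanh a=1$.
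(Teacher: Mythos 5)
First, note that the paper does not prove this statement at all: it is imported verbatim as a known tool from Ruzieh and Powers \cite[Corollary 2.2]{RP90}, so there is no internal proof to compare against; the relevant comparison is with the original argument. Your continuum-limit computation of the leading term is correct and is exactly the limiting form of what happens in \cite{RP90}: differentiating the kernel equation twice, the even solution $f(t)=\cosh\bigl(b(t-\tfrac12)\bigr)$, the reduction to $(b/2)\tanh(b/2)=1$, and $\mu=1/(2a^2)$ all check out. But the actual proof does not discretize an integral operator; it solves the discrete problem exactly. Since the second difference in $i$ of $|i-j|$ is $2\delta_{ij}$, the equation $D(P_n)x=\lambda x$ forces the interior three-term recurrence $\lambda(x_{i-1}-2x_i+x_{i+1})=2x_i$, so the Perron eigenvector is \emph{exactly} $x_i=\cosh\bigl(b(i-\tfrac{n+1}{2})\bigr)$ with $\lambda=\tfrac12\sinh^{-2}(b/2)$, and the boundary rows yield an exact transcendental secular equation for $b$ whose large-$n$ limit is your $a\tanh a=1$. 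Expanding the root of that exact equation gives both the $n^2/(2a^2)$ term and the constant $-(2+a^2)/(6a^2)$ rigorously, with the $O(1/n^2)$ remainder for free.

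This difference matters because your route has a genuine gap at precisely the step you flag as "the main obstacle," and it is worse than a technicality. The sampled continuum eigenfunction approximates the true discrete eigenvector only to relative accuracy $O(1/n)$, while $\|D(P_n)\|=\Theta(n^2)$; even using stationarity of the Rayleigh quotient, an $O(1/n)$ trial-vector error pins $\lambda_1$ down only to $O(n^2\cdot n^{-2})=O(1)$, which completely swamps the constant term you are trying to extract. So Euler--Maclaurin applied with $f(t)=\cosh\bigl(2a(t-\tfrac12)\bigr)$ cannot by itself certify $-(2+a^2)/(6a^2)$; you would need the $1/n$-corrected eigenvector (first-order perturbation theory carried out in full, with the boundary residuals handled separately, since for the exact discrete recurrence the discretization error concentrates entirely in the two end rows). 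Relatedly, your sketch asserts without computation that the order-$n$ endpoint and corner corrections "collapse" into the stated constant --- in the true expansion there is no $n^1$ term, so these corrections must be shown to cancel, which is exactly the kind of verification the unperformed computation would have to supply. In short: correct leading order, correct transcendental equation, but the second-order claim is not proved, and the efficient fix is the Ruzieh--Powers device of solving the discrete recurrence exactly rather than approximating the continuum operator.
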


Finally, we prove an easy but useful fact to conclude this section.

\begin{lem}\label{lem3.1}
Let $G=G[V_1,V_2]$ be a connected bipartite graph with $|V_1|=r$ and $|V_2|=n-r$ and $e(G)=m$.
Then $$\lambda_1(D(G))\geq \frac{2(n^2+(r-1)n-r^2-2m)}{n}.$$
\end{lem}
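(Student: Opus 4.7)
\begin{pf}[Proof proposal.]
The plan is the standard Rayleigh quotient argument applied to the all-ones vector, combined with a bipartiteness-driven lower bound for the sum of pairwise distances (the Wiener index).

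First, I would recall that since $D(G)$ is a real symmetric matrix, for any nonzero $\mathbf{x}\in \mathbb{R}^n$ we have $\lambda_1(D(G))\geq \mathbf{x}^T D(G)\mathbf{x}/\mathbf{x}^T\mathbf{x}$. Taking $\mathbf{x}=\mathbf{1}$, the all-ones vector, yields
$$
\lambda_1(D(G))\;\geq\; \frac{\mathbf{1}^T D(G)\mathbf{1}}{n}\;=\;\frac{2W(G)}{n},
$$
where $W(G)=\sum_{i<j}d_G(v_i,v_j)$ is the Wiener index of $G$.

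Next I would bound $W(G)$ from below using the bipartite structure. Split the $\binom{n}{2}$ unordered vertex pairs into three classes: the $\binom{r}{2}$ pairs inside $V_1$, the $\binom{n-r}{2}$ pairs inside $V_2$, and the $r(n-r)$ cross pairs. Because $G$ is bipartite, any two vertices in the same part are non-adjacent, so each pair inside a part contributes at least $2$ to $W(G)$; and any cross pair has odd distance, so the $r(n-r)-m$ non-edge cross pairs each contribute at least $3$, while the $m$ edges contribute exactly $1$. Adding up,
$$
W(G)\;\geq\; 2\binom{r}{2}+2\binom{n-r}{2}+m+3\bigl(r(n-r)-m\bigr).
$$
A straightforward expansion (the one calculation worth doing explicitly in the write-up) simplifies the right-hand side to $n^2+(r-1)n-r^2-2m$. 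Substituting this into the Rayleigh-quotient bound gives the desired inequality.

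There is essentially no serious obstacle here: the only point that deserves a line of comment is why distances between vertices in the same part are even (hence $\geq 2$) and distances between parts are odd (hence either $1$ or $\geq 3$), both immediate from the bipartite two-coloring. No spectral machinery beyond the Rayleigh principle is needed, and equality analysis is not required by the statement, so the argument stops with the algebraic simplification above.
\end{pf}
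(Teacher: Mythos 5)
Your proposal is correct and follows essentially the same route as the paper: the Rayleigh-quotient bound $\lambda_1(D(G))\geq 2W(G)/n$ combined with the identical pair-decomposition $W(G)\geq m+2\binom{r}{2}+2\binom{n-r}{2}+3\bigl(r(n-r)-m\bigr)$, which simplifies to the stated expression. Your added remark justifying the distance bounds via parity of the bipartition is a welcome explicit touch that the paper leaves implicit, but the argument is the same.
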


\begin{proof}
Note that
\begin{eqnarray*}
W(G)&\geq& m+2\left({r\choose{2}}+{n-r\choose{2}}\right)+3(r(n-r)-m)\\
&=&r(r-1)+(n-r)(n-r-1)+3r(n-r)-2m\\
&=&n^2+(r-1)n-r^2-2m.
\end{eqnarray*}
Then the result follows from that
$\lambda_1(D(G))\geq \frac{2W}{n}.$
\end{proof}

\section{Proofs}

\smallskip
\noindent
{\bf Proof of Theorem \ref{ThGeneralSk(G)}.}
(i) By a simple calculation, we have $S_k(D(K_n))=n-k$
for $k\geq 1$. Let $G\ncong K_n$ be a connected
graph with order $n$. Then
\begin{eqnarray*}
\lambda_1(D(G))\geq \frac{2W(G)}{n}\geq \frac{2\left[m+2({{n}\choose{2}}-m)\right]}{n}=2(n-1)-\frac{2m}{n}.
\end{eqnarray*}
If $m\leq \frac{n(n-k)}{2}$, then $\lambda_1(D(G))\geq n+k-2.$
Note that $\lambda_2(D(G))\geq -1$. By Lemma \ref{LeLambdak-2},
if $n$ is sufficiently large with respect to $k$, then $\lambda_k(D(G))\geq -2$.
So we obtain
\begin{eqnarray*}
S_k(D(G))&=&\lambda_1(D(G))+\lambda_2(D(G))+\cdots+\lambda_k(D(G))\\
&\geq& n+k-2-1-2(k-2)\\
&=&n-k+1\\
&>&n-k\\
&=&S_k(D(K_n)).
\end{eqnarray*}
If $m> \frac{n(n-k)}{2}$, then $m>(1-\frac{1}{k})\frac{n^2}{2}$
when $n\geq k^2$ (recall that $n$ is sufficiently large with
respect to $k$). By Tur\'an's theorem, $G$ contains a $K_k$. Thus, we have
\[\lambda_i(D(G))\geq \lambda_i(D(K_k))=-1 \ \  \mbox{for $i=2,\ldots,k.$}\]
Since $G\neq K_n$, we obtain $\lambda_1(D(G))>n-1$.
It follows that $S_k(D(G))>n-1-(k-1)=n-k=S_k(D(K_n))$. If $G=K_n$,
it is easy to find that $S_k(D(G))=n-k$.
This completes the proof. {\hfill$\Box$}

\vspace{3mm}

(ii) Let $T\ncong K_{1,n-1}$.  Similar to the proof of Lemma \ref{LeLambdak-2},
we have either $\Delta(T)\geq k-1$ or $diam(T)\geq 2k+1$, where $k\geq 2$. It follows
that either $K_{1,k-1}\subset G$ or $P_{2k+2}\subset G$.
Then by Theorem \ref{ThCauchy} and Lemma \ref{LeLambdak-2},
either $\lambda_k(D(T))\geq -2$ or $\lambda_k(D(T))>-1.$
Thus, we have
$\lambda_3(D(T))+\cdots+\lambda_k(D(T))\geq -2(k-2)$.
Note that $diam(T)\geq 3$. Then there exists a bipartite
partition of $T=T(V_1,V_2)$ such that $|V_1|=r$ and
$|V_2|=n-r$ with $2\leq r\leq \frac{n}{2}.$
Then by Lemma \ref{lem3.1}, we have
\begin{eqnarray*}
\lambda_1(D(T))&\geq&\frac{2n^2+2(r-1)n-2r^2-4m}{n}\\
&=& \frac{2n^2+2(r-1)n-2r^2-4(n-1)}{n}\\
&\geq&\frac{2n^2+2(2-1)n-2\times2^2-4(n-1)}{n}\\
&=&2n-2-\frac{4}{n}\\
&>&2n-3.
\end{eqnarray*}
Combing with $\lambda_2(D(T))\geq-1$, we have
$$S_k(T)>2n-3-1-2(k-2)=2n-2k=S_k(K_{1,n-1}).$$
If $T=K_{1,n-1}$, then $S_k(T)=2n-2k$.
This completes the proof. {\hfill$\Box$}

\vspace{3mm}

\noindent
{\bf Proof of Theorem \ref{thLambda2Upper}.}
If $d=1$, then $G\cong K_n$ and $\lambda_2(D(G))=-1$,
hence the result holds. In the following, set $\lambda_2(D)=\lambda_2(D(G))$.
Assume that $d\geq 2.$
Let $X$ be an eigenvector of $D(G)$ corresponding to $\lambda_{2}(D)$.
We use $x_{v}$ to denote the entry of $X$ corresponding to the vertex $v\in V(G)$. Define
$S^{+}=\{v\in V(G):x_{v}>0\}$ and $S^{-}=\{v\in V(G):x_{v}<0\}$.
For $v\in S^{+}$, we have
\begin{eqnarray*}
\lambda_{2}(D)x_{v}&=&\sum_{u\in S^{+}\backslash \{v\}}d(u,v)x_{u}+\sum_{w\in S^{-}}d(w,v)x_{w}\leq d\sum_{u\in S^{+}\backslash \{v\}}x_{u}+\sum_{w\in S^{-}}x_{w},
\end{eqnarray*}
that is,
$(\lambda_{2}(D)+d)x_{v}\leq d\sum_{u\in S^{+}}x_{u}+\sum_{w\in S^{-}}x_{w}$.
So
$$(\lambda_{2}(D)+d)\sum_{v\in S^{+}}x_{v}\leq |S^{+}|d\sum_{u\in S^{+}}x_{u}+|S^{+}|\sum_{w\in S^{-}}x_{w},$$
that is,
\begin{equation}\label{eq1}
(\lambda_{2}(D)+d-d|S^{+}|)\sum_{v\in S^{+}}x_{v}\leq |S^{+}|\sum_{w\in S^{-}}x_{w}.
\end{equation}
For $v\in S^{-}$, we have
\begin{eqnarray*}
\lambda_{2}(D)x_{v}&=&\sum_{u\in S^{+}}d(u,v)x_{u}+\sum_{w\in S^{-}\backslash \{v\}}d(w,v)x_{w}\geq \sum_{u\in S^{+}}x_{u}+d\sum_{w\in S^{-}\backslash \{v\}}x_{w}.
\end{eqnarray*}
Similarly,
\begin{equation}\label{eq2}
(\lambda_{2}(D)+d-d|S^{-}|)\sum_{v\in S^{-}}x_{v}\geq |S^{-}|\sum_{w\in S^{+}}x_{w}.
\end{equation}
Combining Eqs. (\ref{eq1}) and (\ref{eq2}), we obtain
$$(\lambda_{2}(D)+d-d|S^{+}|)(\lambda_{2}(D)+d-d|S^{-}|)\sum_{v\in S^{+}}x_{v}\sum_{u\in S^{-}}x_{u}\leq |S^{+}||S^{-}|\sum_{v\in S^{+}}x_{v}\sum_{u\in S^{-}}x_{u},$$
that is,
$$(\lambda_{2}(D)+d-d|S^{+}|)(\lambda_{2}(D)+d-d|S^{-}|)\geq |S^{+}||S^{-}|.$$
Let
$$f(y)=(y+d-d|S^{+}|)(y+d-d|S^{-}|)-|S^{+}||S^{-}|.$$
Clearly, the roots of $f(y)=0$ are
$$y_{1}=\frac{d(|S^{+}|+|S^{-}|-2)+\sqrt{d^{2}(|S^{+}|+|S^{-}|)^{2}-4(d^2-1)|S^{+}||S^{-}|}}{2}$$
and
$$y_{2}=\frac{d(|S^{+}|+|S^{-}|-2)-\sqrt{d^{2}(|S^{+}|+|S^{-}|)^{2}-4(d^2-1)|S^{+}||S^{-}|}}{2},$$
respectively.
From Eqs. (\ref{eq1}) and (\ref{eq2}), we can see that $\lambda_{2}(D)\leq d|S^{+}|-d$ and $\lambda_{2}(D)\leq d|S^{-}|-d$. Hence,
$y_{1}>\frac{d(|S^{+}|+|S^{-}|-2)}{2}\geq \min\{d|S^{+}|-d,d|S^{-}|-d\}\geq \lambda_{2}(D)$.
Since $f(\lambda_{2}(D))=(\lambda_2(D)-y_1)(\lambda_2(D)-y_2)\geq 0$, we have
\begin{eqnarray*}
\lambda_{2}(D)&\leq& y_{2}\\
&=&\frac{d(|S^{+}|+|S^{-}|-2)-\sqrt{d^{2}(|S^{+}|+|S^{-}|)^{2}-4(d^2-1)|S^{+}||S^{-}|}}{2}\\
&\leq& \frac{d(|S^{+}|+|S^{-}|-2)-(|S^{+}|+|S^{-}|)}{2}\\
&\leq &\frac{d-1}{2}n-d,
\end{eqnarray*}
where the second inequality holds since $4|S^{+}||S^{-}|\leq (|S^{+}|+|S^{-}|)^2$
and the last inequality holds since $|S^{+}|+|S^{-}|\leq n$.

If $\lambda_{2}(D)=\frac{d-1}{2}n-d$, then $|S^{+}|+|S^{-}|=n$,
$|S^{+}|=|S^{-}|$ and the equalities in Eqs. (\ref{eq1}) and (\ref{eq2})
hold. This implies that for any vertices $v,w\in S^-$,
$d(w,v)=d\geq 2$, and hence $v,w$ are nonadjacent; for
any vertex $v\in S^-$ and $u\in S^+$, $d(u,v)=1$, which
implies that $u$ and $v$ are adjacent. Thus,
$G[S^{-}]$  and similarly, $G[S^{+}]$ are independent sets and each
vertex in $S^{+}$ is adjacent to each vertex in $S^{-}$,
which implies that $G\cong K_{\frac{n}{2},\frac{n}{2}}$.

Conversely, it is routine to check that $\lambda_2(D(K_{\frac{n}{2},\frac{n}{2}}))=\frac{n}{2}-2$,
completing the proof.  {\hfill$\Box$}

\vspace{3mm}

Let $G$ be a graph and $v\in V(G)$. We denote by
$t(G,u)$ the number of triangles in $G$ containing the vertex $u$.

\vspace{3mm}
\noindent
{\bf Proof of Theorem \ref{ThIndepTri}.}
(i) Since $\alpha(G)\leq s$, we have $diam(G)\leq 2s-1$. By Theorem \ref{thLambda2Upper},
\begin{align}\label{Al_Th7-1}
\lambda_2(D)\leq (s-1)n-(2s-1).
\end{align}
Recall that a corollary of Tur\'an's inequality says that for any graph of order $n$
and size $m$, $\alpha(G)\geq \frac{n}{1+\overline{d}}$,
where $\overline{d}=\frac{2m}{n}$ (see Alon and Spencer \cite[pp.95]{AS}). Thus, we obtain
$s\geq \alpha(G)\geq \frac{n^2}{n+2m}$,
that is,
\begin{align}\label{Al_Th7-2}
m\geq \frac{n^2-sn}{2s}.
\end{align}

For each vertex $u\in V(G)$, $t(G,u)=e(G[N(u)])$. Note that $\alpha(G[N(u)])\leq s$.
Then (\ref{Al_Th7-2}) becomes
\begin{align}\label{Al_Th7-3}
t(G;u)\geq\frac{d^2(u)-sd(u)}{2s}.
\end{align}
Summing over all vertices for (\ref{Al_Th7-3}), we have
\begin{align}\label{Al_1}
3t(G)=\sum_{v\in V(G)}t(G;u)\geq\sum_{u\in V(G)}\frac{d^2(u)-sd(u)}{2s}=\frac{\sum_{v\in V(G)}d^2(u)}{2s}-m.
\end{align}
By AG-mean inequality,
\begin{align}\label{Al_2}
\frac{\sum_{v\in V(G)}d^2(u)}{2s}-m\geq \frac{(\sum_{v\in V(G)}d(u))^2}{2sn}-m=m(\frac{2m}{sn}-1)\geq m(\frac{n-s}{s^2}-1).
\end{align}
By (\ref{Al_1}) and (\ref{Al_2}), we obtain
\begin{align}\label{Al_Th7-4}
3s^3\cdot \frac{t(G)}{m}\geq s(n-s)-s^3>(s-1)n-(2s-1).
\end{align}
By (\ref{Al_Th7-1}) and (\ref{Al_Th7-4}), the proof is completed.

\vspace{3mm}

(ii) Setting $s=2$ in Theorem \ref{ThIndepTri}(i),
we have $\lambda_2(D(G))< \frac{24}{m}\cdot t(G)$
for $n>9$. By Tur\'an's theorem, when $\alpha(G)\leq 2$ and $n\geq 11$, we get
$$m\geq \frac{n^2-2n}{4}=\frac{n}{2}(\frac{n}{2}-1)\geq\frac{11}{2}\cdot \frac{9}{2}=24.75>24.$$
The proof is completed. {\hfill$\Box$}

\vspace{3mm}
\noindent
{\bf Proof of Proposition \ref{ThSkDiameter}.}
By Theorem \ref{ThRP} and Lemma \ref{LeLambdak-2}, we have $S_k(P_n)\geq \frac{n^2}{3}-2(k-1)$.
From Theorem \ref{ThZI}, we have
$\lambda_1(D)< dn-\frac{d(d-1)}{2}-1.$
By Theorems \ref{thLambda2Upper} and \ref{ThZI}, we have
\begin{eqnarray*}
S_k(G)&\leq& dn-\frac{d(d-1)}{2}-1+k(\frac{n(d-1)}{2}-d)\\
&=&(k\frac{d-1}{2}+d)n-\frac{d(d-1)}{2}-1-dk\\
&<&\frac{n^2}{3}-2(k-1)~~\mbox{(since $d<\frac{2n}{3(k+2)}$)}\\
&\leq&S_k(P_n).
\end{eqnarray*}
This completes the proof. {\hfill$\Box$}

\section{Concluding remarks}
It is known that $S_k(K_{r,n-r})=2n-2k$ for $k\geq 2.$ Theorem \ref{ThGeneralSk(G)} $(ii)$ shows that
$S_k(D)\geq 2n-2k=S_k(K_{1,n-1})$ if $G$ is a tree, so we may have the following more general problem.

\begin{prob}\label{prob1}
Let $G$ be a connected bipartite graph of order $n$. For an integer $k\geq2$
and sufficiently large $n$ with respect to $k$, does there always hold $S_{k}(G)\geq 2n-2k$,
where the equality holds if and only if $G\cong K_{r,n-r}$ for $1\leq r\leq n-1$?
\end{prob}

By Theorem \ref{thLambda2Upper}, we have $\lambda_2(D)\leq \frac{n(d-1)}{2}-d$. If $d=2$, then $\lambda_2(D)\leq \frac{n}{2}-2$.
It seems that the upper bound holds for every connected graph of order $n$, so we have the following problem.

\begin{prob}
Let $G$ be a connected graph with second largest distance eigenvalue $\lambda_{2}(D)$.
Then $\lambda_{2}(D)\leq \frac{n}{2}-2$ and the equality holds if and only if $G\cong K_{\frac{n}{2},\frac{n}{2}}$.
\end{prob}

\noindent\textbf{Acknowledgement}

\vspace{3mm}
The author would like to thank Bo Ning for sharing the proof of theorem \ref{ThIndepTri}.

\end{document}